\newtheorem{lemma}{Lemma}
\newtheorem{theorem}{Theorem}   
\newtheorem{ass}{Assumption} 
\newtheorem{remark}{Remark}
\def\ps@pprintTitle{%
 \let\@oddhead\@empty
 \let\@evenhead\@empty
 \def\@oddfoot{}%
 \let\@evenfoot\@oddfoot}
\date{}
\begin{document}

\begin{frontmatter}



\title{Seeking Nash Equilibrium in Non-Cooperative Differential Games}


\author[label1]{Zahra Zahedi }
\author[label1]{Alireza Khayatian}
\author[label1]{Mohammad Mehdi Arefi\corref{cor1}}
\ead{arefi@shirazu.ac.ir}
\cortext[cor1]{Corresponding author}
\author[label2]{Shen Yin}
\fntext[]{Journal of Vibration and Control\\
https://doi.org/10.1177/10775463221122120}

\address[label1]{The Department of Power and Control Engineering, School of Electrical and Computer Engineering, Shiraz University, Shiraz 71946-84636, Iran}
\address[label2]{Department of Mechanical and Industrial Engineering, Faculty of Engineering, Norwegian University of Science and Technology , Trondheim 7033, Norway}

\begin{abstract}
This paper aims at investigating the problem of fast convergence to the Nash equilibrium (NE) for N-Player noncooperative differential games. The proposed method is such that the players attain their NE point without steady-state oscillation (SSO) by measuring only their payoff values with no information about payoff functions, the model and also the actions of other players are not required for the players. The proposed method is based on an extremum seeking (ES) method, and moreover, compared to the traditional ES approaches, in the presented algorithm, the players can accomplish their NE faster. In fact, in our method the amplitude of the sinusoidal excitation signal in classical ES is adaptively updated and exponentially converges to zero. In addition, the analysis of convergence to NE is provided in this paper. Finally, a simulation example confirms the effectiveness of the proposed method. 
\end{abstract}



\begin{keyword}
Extremum seeking \sep Non-cooperative differential games \sep Learning \sep Nash equilibria.


\end{keyword}

\end{frontmatter}


\section{Introduction}
%
%
%
%
The problem of finding an algorithm to attain NE has inspired many researchers thanks to the vast variety of applications of differential non-cooperative games in areas such as motion planning \cite{c1}, \cite{c2}, \cite{c3}, formation control \cite{c4}, \cite{c5}, wireless networks \cite{c6}, \cite{c7}, mobile sensor networks \cite{c8}, \cite{c9}, network security \cite{c10} and demand side management in smart grid \cite{c11}, \cite{c12}.\\
\indent The majority of algorithms which are designed to achieve convergence to NE are based on the model information and observation of other player's actions. These algorithms usually are designed on the basis of best response and fictitious play strategy. In \cite{c13}, each agent plays a best response strategy in a non-myopic Cournot competition. A form of dynamic fictitious and gradient play strategy in a continuous time form of repeated matrix games have been introduced in \cite{c14} and the convergence to NE has been shown. Distributed iterative algorithms have been considered in \cite{c15} to compute the equilibria in general class of non-quadratic convex games. The authors in \cite{c16} have presented two distributed learning algorithms in which players remember their own payoff values and actions from the last play; also the convergence to the set of Nash equilibria was proved. An algorithm based on the combination of the support enumeration method and the local search method for finding NE has been designed in \cite{c17}.\\
\indent Furthermore, many works are uncoupled which means each player generates its actions based on its own payoff not the actions or payoffs of the opponents. A new type of learning rule which is called regret testing in a finite game was introduced in \cite{c18} in which behaviors of players converge to the set of Nash equilibria. In \cite{c19}, almost certain convergence to NE in a finite game with uncoupled strategy, and possibility and impossibility results are studied.\\ 
\indent Moreover, some of the non-model based algorithms are a kind of extremum seeking control algorithm (ESC) which is a real time optimization tool that can find an extremum value of an unknown mapping \cite{c20}, \cite{c21}, \cite{c22}. For example, in \cite{c23}, the Nash seeking problem for \textit{N}-player non-cooperative games with static quadratic and dynamic payoff functions is presented, which is based on the sinusoidal perturbation extremum seeking approach. The same analysis for dynamic systems with non-quadratic payoffs has been considered in \cite{c24}, \cite{c25}. In \cite{oliveira2021nash}, they proposed a strategy for locally stable convergence to NE in quadratic non-cooperative games which are subject to diffusion PDE dynamic. The problem of attaining NE in non-cooperative games based on the stochastic extremum seeking is studied in \cite{c26}. Also, learning of a Generalized NE in strongly monotone games with nonlinear dynamical agents has been studied in \cite{krilavsevic2021learning}. However, usually the convergence to NE in all of the non-model based extremum seeking methods mentioned have steady-state oscillation.\\
\indent In this paper, we have modified an extremum seeking control without steady-state oscillation (ESCWSSO) algorithm \cite{c27} to design a seeking scheme to solve this problem for an \textit{N}-player differential non-cooperative game, which is faster than conventional extremum seeking algorithms in achieving NE. In this algorithm, the players can generate their actions only by measuring their own payoff value with no need for information regarding the model, details of the payoff function, and actions of other players. More importantly, the NE can be achieved fast and without steady-state oscillation, because the amplitude of excitation sinusoidal signal in classical extremum seeking is adjusted to exponentially converge to zero. As a result, the convergence will be fast and the improper effects of steady-state oscillation will be eliminated. The similar analyses have been provided in \cite{c28} and \cite{c29} for static non-cooperative games with quadratic and non-quadratic payoff functions. However, since they have investigated static games, dynamic systems are excluded from their analysis. Additionally, authors in \cite{c30} have proposed a new algorithm for fast and without steady state oscillation convergence to NE with a local dynamic within the algorithm for static non-cooperative games.\\ 
\indent The main contributions of the paper are as follows
\begin{itemize}
\item Comparing to the previous works for achieving NE, this paper can achieve the NE fast and without steady state oscillation.
\item In this paper we modified the previous algorithm \cite{c27} in order to be applicable on differential games which are multi-agent systems and proved the convergence of NE in such games.
\item In comparison to the previous works in \cite{c29}, and \cite{c30} which all were on static games, this paper studied differential and dynamic games which can make this algorithm applicable for a vast variety of new applications.
\end{itemize}
\indent The rest of this paper is organized as follows. In \cref{sec2}, the Nash seeking algorithm and the general description of the problem are stated. \cref{sec3} includes convergence and stability analysis. A numerical example with simulation is presented in \cref{sec4}. Finally, a conclusion is provided in \cref{sec5}.

\section{Preliminaries}
\label{sec2}

\indent In this section, we consider the problem of fast convergence to NE without steady state oscillation in non-cooperative differential games with \textit{N} players in which players can converge to their NE fast and without oscillation only with the measurement of their own payoff values, which means that they do not need to have any knowledge of the model and actions of other players.\\
\indent Consider the following nonlinear model of player $i$ in an \textit{N}-player non-cooperative differential game:
\begin{equation}
\label{eq1}
\dot{x}=f(x,u)
\end{equation}
\begin{equation}
\label{eq2}
J_i = j_i(x)
\end{equation}
where $i\in\{1, \hdots, N\}$, $u\in R^N$ is actions of players which is $u=[u_1, \hdots, u_N]$, $x$ is the state and $J_i$, $j_i$ and $f$ are smooth functions where $j_i:R^n\to R$, $f:R^n\times R^N\to R^n$ and $J_i\in R$. At first, let's make the following assumptions about the game, which are the same as \cite{c24}.
\begin{ass}
\label{as1}
There exists a smooth function $l:R^N\to R^n$ such that $f(x,u)=0$ if and only if $x=l(u)$.
\end{ass}
\begin{ass}
\label{as2}
The equilibrium of system (\ref{eq1}) which is $x=l(u)$ is locally exponentially stable for all $u\in R^N$.
\end{ass}
Thus, these assumptions mean that any actions of the players can be designed to stabilize the equilibrium without any knowledge about the model, other players' actions or form of $j_i$, $f$ and $l$.\\
Each player $i$ employs the following algorithm in order to generate its actions to achieve NE.
\begin{equation}
\label{eq3}
\begin{split}
\dot{\hat{u}}_i &= k_i(J_i-n_i)\sin(\omega_it+\phi_i), \\
 u_i &= \hat{u}_i+a_i\sin(\omega_it+\phi_i),\\
 \dot{a}_i &= -\omega_{li}a_i+b_i\omega_{li}(J_i-n_i),\\ 
 \dot{n}_i &= -\omega_{hi}n_i+\omega_{hi}J_i, 
\end{split}
\end{equation}
where $k_i$ and $\omega_i$ are positive constants, $\omega_i=\bar{\omega}\tilde{\omega}_i$ in which $\bar{\omega}$ is a positive rational number and $\tilde{\omega}_i$ is a positive real number, $J_i$ is the measurement value of payoff, $n_i$ is the low-frequency components of $J_i$, $b_i$ and $\phi_i$ are constants where $b_i$ can adjust the speed of convergence, $\omega_{li}$ and $\omega_{hi}$ are cut-off frequencies for low pass and high pass filters respectively. Also, $\omega_{li}$ and $b_i$ are new parameters which can be used instead of excitation sinusoidal signal's amplitude in classical extremum seeking control, which are designed to make $a_i$ positive. \Cref{fig1} shows the diagram of the algorithm in an \textit{N}-player non-cooperative differential game.\\
\begin{figure}[h]
\begin{center}
\includegraphics[height=8cm]{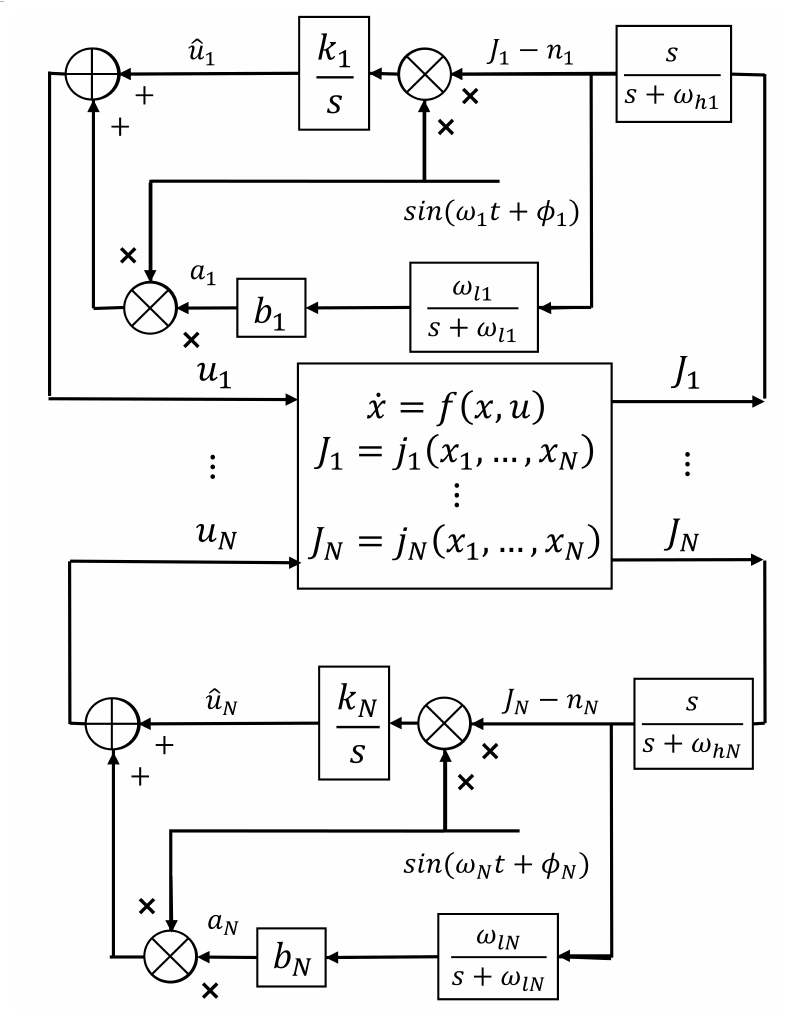}    
\caption{NE seeking without steady-state oscillation scheme for an \textit{N}-player non-cooperative differential game}  
\label{fig1}                                 
\end{center}                                 
\end{figure}
\indent We introduce the following errors for more analysis:
\begin{equation}
\label{eq4}
\begin{split}
\tilde{u}_i &= \hat{u}_i-u_i^\ast,\\
\tilde{n}_i &= n_i-j_i\circ l(u^\ast). 
\end{split}
\end{equation}
where $u^\ast$ is a vector of players' NE $u^\ast=[u_1^\ast, \hdots, u_N^\ast]$.\\
\indent By substituting (\ref{eq4}) into (\ref{eq3}), the following equation is obtained:
\begin{equation}
\label{eq5}
\begin{split}
\dot{x}&=f(x,\tilde{u}+u^\ast+a\times \eta(t))\\
\dot{\tilde{u}}_i &= k_i(j_i(x)-j_i\circ l(u^\ast)-\tilde{n}_i)\sin(\omega_it+\phi_i), \\
 \dot{a}_i &= -\omega_{li}a_i(t)+b_i\omega_{li}(j_i(x)-j_i\circ l(u^\ast)-\tilde{n}_i),\\ 
 \dot{\tilde{n}}_i &= -\omega_{hi}\tilde{n}_i(t)+\omega_{hi}(j_i(x)-j_i\circ l(u^\ast)), 
\end{split}
\end{equation}
where $\eta_i(t)=\sin(\omega_it+\phi_i)$, $\tilde{u}=[\tilde{u}_1, \hdots, \tilde{u}_N]$, $a=[a_1(t), \hdots, a_N(t)]$, and $\eta(t)=[\eta_1(t), \hdots, \eta_N(t)]$. In addition, in the equation $a\times \eta(t)$, $(\times)$ sign  means the entry-wise product of two vectors. \\
\indent For more analysis, the design parameters are chosen as follows:
\begin{equation}
\label{eq6}
\begin{split}
\omega_{li} &=\omega\omega_{Li}=\omega\epsilon\omega_{Li}^\prime=O(\omega\epsilon\delta),\\
\omega_{hi} &=\omega\omega_{Hi}=\omega\delta\omega_{Hi}^\prime=O(\omega\delta),\\
k_{i} &=\omega K_{i}=\omega\delta K_{i}^\prime=O(\omega\delta),
\end{split}
\end{equation}
where $\epsilon$ and $\delta$ are small constant parameters, and $\omega_{Hi}^\prime$, $K_i^\prime$ and $\omega_{Li}^\prime$ are $O(1)$ positive constants. Afterwards, the system in time scale $\tau=\bar{\omega}t$ is obtained.
\begin{equation}
\label{eq7}
\bar{\omega}\frac{dx}{d\tau}=f(x,\tilde{u}+u^\ast+a\times\eta(\tau))
\end{equation}
\begin{equation}
\label{eq8}
\begin{split}
\frac{d}{d\tau}\left[\begin{array}{c} \tilde{u}_i(\tau)\\ a_i(\tau)\\\tilde{n}_i(\tau)\end{array} \right]=\delta\left[\begin{array}{l} 
K_i^\prime(j_i(x)-j_i\circ l(u^\ast)-\tilde{n}_i)\eta_i(\tau)\\
\epsilon\omega_{Li}^\prime (b_i(j_i(x)-j_i\circ l(u^\ast)-\tilde{n}_i)-a_i)\\
\omega_{Hi}^\prime(j_i(x)-j_i\circ l(u^\ast)-\tilde{n}_i)\end{array}\right]
\end{split}
\end{equation}
Furthermore, the following assumptions are made to ensure the existence of NE $u^\ast$:
\begin{ass}
\label{as3}
The game admits at least one stable NE $u^\ast$ in which 
\begin{equation}
\label{eq9}
\frac{\partial j_i\circ l}{\partial u_i}(u^\ast)=0, \hspace{10pt} \frac{\partial^2 j_i\circ l}{\partial u_i^2}(u^\ast)<0.
\end{equation}
\end{ass}
\begin{ass}
\label{as4}
The following matrix is diagonally dominant and consequently nonsingular. Hence, according to Assumption \ref{as3} and Gershgorin Circle theorem \footnote{According to Gershgorin Circle theorem, a diagonally dominant matrix is stable (Hurwitz) when all elements of the diagonal have negative real part.} \cite{c31}, it is also stable Hurwitz.
\begin{equation}
\label{eq10}
\Delta=\left[\begin{array}{cccc} \frac{\partial^2 j_1ol(u^\ast)}{\partial u_1^2}&\frac{\partial^2 j_1ol(u^\ast)}{\partial u_1\partial u_2}&\ldots&\frac{\partial^2 j_1ol(u^\ast)}{\partial u_1\partial u_N}\\\frac{\partial^2 j_2ol(u^\ast)}{\partial u_1\partial u_2}&\frac{\partial^2 j_2ol(u^\ast)}{\partial u_2^2}&&\frac{\partial^2 j_2ol(u^\ast)}{\partial u_2\partial u_N}\\\vdots&&\ddots&\vdots\\\frac{\partial^2 j_Nol(u^\ast)}{\partial u_1\partial u_N}&&&\frac{\partial^2 j_Nol(u^\ast)}{\partial u_N^2}\end{array}\right]
\end{equation}
\end{ass}
\section{Main Results}
\label{sec3}
In this section, local stability and convergence analysis of proposed algorithm will be studied.
\subsection{Averaging analysis}
For averaging analysis, let's freeze $x$ in quasi-steady state equilibrium:
\begin{equation}
\label{eq11}
x=l(\tilde{u}+u^\ast+a\times\eta(\tau))
\end{equation}
By substituting (\ref{eq11}) to (\ref{eq8}), the reduced system is given as follows:
\begin{equation}
\label{eq12}
\begin{split}
&\frac{d}{d\tau}\left[\begin{array}{c} \tilde{u}_{ri}(\tau)\\ a_{ri}(\tau)\\\tilde{n}_{ri}(\tau)\end{array} \right]=\\
&\delta{\small \left[\begin{array}{l} 
K_i^\prime(j_i\circ l(\tilde{u}_r+u^\ast+a_r\times\eta)-j_i\circ l(u^\ast)-\tilde{n}_{ri})\eta_i(\tau)\\
\epsilon\omega_{Li}^\prime (b_i(j_i\circ l(\tilde{u}_r+u^\ast+a_r\times\eta)-j_i\circ l(u^\ast)-\tilde{n}_{ri})-a_{ri})\\
\omega_{Hi}^\prime(j_i\circ l(\tilde{u}_r+u^\ast+a_r\times\eta)-j_i\circ l(u^\ast)-\tilde{n}_{ri})\end{array}\right]}
\end{split}
\end{equation}
in which if we consider $j_i\circ l(\tilde{u}_r+u^\ast+a_r\times\eta)-j_i\circ l(u^\ast)=h_i\circ l(\tilde{u}_r^{av}+a_r^{av}\times\eta)$, regarding to Assumption \ref{as3}, we have
\begin{equation}
\label{eq13}
h_i\circ l(0)=0, \hspace{10pt} \frac{\partial h_i\circ l}{\partial u_i}(0)=0, \hspace{10pt} \frac{\partial^2 h_i\circ l}{\partial u_i^2}(0)<0.
\end{equation}
Since (\ref{eq12}) is in the proper form of averaging theory \cite{c32}, the averaging system is obtained as follows:
\begin{equation}
\label{eq14}
\begin{split}
&\dot{\tilde{u}}_{ri}^{av}(\tau)=\delta({\lim_{T\to+\infty}}\frac{K_i^\prime}{T}\int_0^T(h_i\circ l(\tilde{u}_r^{av}+a_r^{av}\times\eta)\eta_i(\tau)\,d\tau),\\
&\dot{a}^{av}_{ri}(\tau)=\delta\epsilon\omega_{Li}^\prime (b_i({\lim_{T\to+\infty}}\frac{1}{T}\int_0^T h_i\circ l(\tilde{u}_r^{av}+a_r^{av}\times\eta)\,d\tau\\&-\tilde{n}^{av}_{ri})-a^{av}_{ri}),\\
&\dot{\tilde{n}}^{av}_{ri}(\tau)=\delta\omega_{Hi}^\prime({\lim_{T\to+\infty}}\frac{1}{T}\int_0^T h_i\circ l(\tilde{u}_r^{av}+a_r^{av}\times\eta)\,d\tau-\\&\tilde{n}^{av}_{ri}).
\end{split}
\end{equation}
Therefore, the following result can be derived.
\begin{theorem}
\label{th1}
Consider system (\ref{eq12}) for an \textit{N}-player game, regarding to Assumptions \ref{as3} and \ref{as4} where $\omega_i\not=\omega_j$, $\omega_i\not=\omega_j+\omega_k$, $2\omega_i\not=\omega_j+\omega_k$, $\omega_i\not=2\omega_j+\omega_k$, $\omega_i\not=2\omega_j$, $\omega_i\not=3\omega_j$ and $\frac{\omega_i}{\omega_j}$ is rational, for all $i, j, k\in\{1,\hdots,N\}$. Then, with constants $\sigma$, $\overline{\epsilon}$, $\overline{\delta}>0$ which $0<\epsilon<\overline{\epsilon}$ and $0<\delta<\overline{\delta}$, there exists a neighborhood for equilibrium point of average system $(\tilde{u}_r^e, 0, \tilde{n}_r^e)$ that $(\tilde{u}_r, a_r, \tilde{n}_r)$ will exponentially converge to that neighborhood.
\end{theorem}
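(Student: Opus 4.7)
The plan is to invoke the averaging theorem of \cite{c32} applied to the reduced system (\ref{eq12}), which is already in the standard slow-time form with small parameter $\delta$. First I would establish that the averaged system (\ref{eq14}) possesses a locally exponentially stable equilibrium in a neighborhood of $(0,0,0)$ for $\tilde{u}_r^{av}$, $a_r^{av}$, $\tilde{n}_r^{av}$, and then lift this conclusion back to (\ref{eq12}) via the standard averaging estimate, which yields convergence of the trajectories to an $O(\delta)$-neighborhood of this equilibrium on an exponentially long time interval.

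The key technical step is the computation of the time averages in (\ref{eq14}). I would Taylor expand $h_i\circ l$ around the origin and use (\ref{eq13}): the zeroth order term vanishes, the $u_i$-linear term vanishes in the $i$th direction by the NE condition $\partial_{u_i}(j_i\circ l)(u^\ast)=0$, and the quadratic terms produce products of the perturbation signals $\eta_j(\tau)=\sin(\omega_j\tau+\phi_j)$. The frequency conditions listed in the theorem statement (no resonances of the forms $\omega_i=\omega_j$, $\omega_i=\omega_j+\omega_k$, $\omega_i=2\omega_j$, $\omega_i=2\omega_j+\omega_k$, $\omega_i=3\omega_j$, $2\omega_i=\omega_j+\omega_k$) are exactly what is needed to kill the unwanted mixed harmonics up to cubic order, so that only the products $\eta_i^2$ survive in the relevant integrals, yielding clean averages proportional to $a_{ri}^{av}/2$ and to $(a_{ri}^{av})^2/2$. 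After this calculation, the $\dot{\tilde{u}}_{ri}^{av}$ equation reduces, to leading order, to a gradient-like dynamic driven by $a_r^{av}\times(\Delta\tilde{u}_r^{av})$ type terms, while $\dot{a}_{ri}^{av}$ and $\dot{\tilde{n}}_{ri}^{av}$ are stable first-order filters forced by quadratic residuals in $a_r^{av}$ and $\tilde{u}_r^{av}$.

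With the averaged system in hand, I would locate the equilibrium via the implicit function theorem, starting from the obvious solution at $\epsilon=0$, $a_r^{av}=0$, $\tilde{u}_r^{av}=0$, $\tilde{n}_r^{av}=0$, and then compute the Jacobian at this equilibrium. The $\tilde{u}_r^{av}$-block of the Jacobian is proportional to $\delta\,\mathrm{diag}(K_i')\cdot\mathrm{diag}(a_{ri}^{av})\cdot\Delta/2$, while the $a_r^{av}$ and $\tilde{n}_r^{av}$ blocks are diagonal with negative entries coming from the filter cut-off structure; a block-triangular argument, combined with Assumption \ref{as4} ensuring that $\Delta$ is Hurwitz, then shows the full Jacobian is Hurwitz. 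Applying the averaging theorem provides constants $\overline{\epsilon},\overline{\delta},\sigma>0$ and the desired exponential convergence.

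The main obstacle I anticipate is the bookkeeping of the Taylor expansion and the associated trigonometric integrals: one must carry the expansion far enough (at least to third order in the perturbation) to expose the leading drift in $\tilde{u}_{ri}^{av}$ after multiplication by $\eta_i(\tau)$, and verify that every cross-frequency term that could interfere is annihilated by exactly one of the listed non-resonance assumptions. A secondary subtlety is the interplay of the two small parameters $\epsilon$ and $\delta$, which introduces a further two-time-scale structure within the averaged dynamics and requires either a singular perturbation argument on $a_r^{av}$ or a joint Lyapunov argument using the block-triangular form of the Jacobian to conclude uniform exponential stability for all sufficiently small $\epsilon,\delta$.
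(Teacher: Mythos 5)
Your outline of the averaging computation itself --- Taylor expanding $h_i\circ l$ to third order, using the non-resonance conditions to kill the cross-frequency integrals, and extracting the leading drift proportional to $a_{ri}^{av}\,\partial^2(h_i\circ l)/\partial u_{ri}\partial u_{rj}(0)/2$ --- matches what the paper does (cf.\ (\ref{eq23}), (\ref{eq30}) and the Appendix). But your stability argument has a genuine gap at its central step. You propose to locate the equilibrium by the implicit function theorem starting from $(\tilde u_r^{av},a_r^{av},\tilde n_r^{av})=(0,0,0)$ and then show the Jacobian there is Hurwitz by a block-triangular argument, with the $\tilde u$-block "proportional to $\delta\,\mathrm{diag}(K_i')\,\mathrm{diag}(a_{ri}^{av})\,\Delta/2$." The equilibrium of the averaged system, however, has $a_r^{av}=0$ (this is the whole point of the vanishing-amplitude scheme), so at the equilibrium that block is identically zero: the full Jacobian has $N$ zero eigenvalues, it is not Hurwitz, linearization cannot deliver exponential stability, and the implicit function theorem does not apply at a singular Jacobian either. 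The contraction of $\tilde u_r^{av}$ is only active while $a_r^{av}>0$, and $a_r^{av}$ itself decays to zero on the slower $\delta\epsilon$ time scale; any correct proof must confront this degeneracy rather than assume a uniformly Hurwitz linearization.

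The paper resolves this by a center manifold reduction: it groups $(\tilde u_{ri}^{av},a_{ri}^{av},\epsilon)$ as the center variables (zero linear part, $A_1=0$) and $\tilde n_{ri}^{av}$ as the stable variable ($A_2=-\delta\omega_{Hi}'$), computes the manifold $y=q(z)$ in (\ref{eq18}), and reduces to (\ref{eq19})--(\ref{eq20}). It then treats $a_r^{av}$ as a frozen, small \emph{positive} parameter: for each such value the $\tilde u$-subsystem has an $O(a_r^{av^2})$-biased equilibrium (\ref{eq28}) whose Jacobian (\ref{eq30}) is Hurwitz by Assumptions \ref{as3}--\ref{as4}, while (\ref{eq20}) is handled by a separate perturbation argument in $\epsilon$. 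Even then, the conclusion is convergence to a \emph{neighborhood} of $(\tilde u_r^e,0,\tilde n_r^e)$, and Remark \ref{rem3} concedes the existence of an equilibrium manifold on which $\tilde u$ need not reach zero. Your instinct that the $\epsilon$--$\delta$ interplay requires a singular perturbation or joint Lyapunov argument is correct, but the block-triangular Hurwitz claim you lean on is false at the relevant equilibrium, so as written the proposal does not close.
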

\begin{proof}
According to this fact that $\frac{d\epsilon}{d\tau}=0$ and by using the center manifold technique \cite{c33}, we rewrite (\ref{eq14}) in the following form:
\begin{equation}
\label{eq15}
\dot{z}=\frac{d}{d\tau}\left[\begin{array}{c} \tilde{u}_{ri}^{av}(\tau)\\a^{av}_{ri}(\tau)\\ \epsilon\end{array}\right]=A_1z+g_1(z,y)
\end{equation}
\begin{equation}
\label{eq16}
\dot{y}=\frac{d\tilde{n}^{av}_{ri}(\tau)}{d\tau}=A_2z+g_2(z,y)
\end{equation}
where $A_1=\left[\begin{array}{ccc}0&0&0\\0&0&0\\0&0&0\end{array}\right]$,$g_1(z,y)=\delta\left[\begin{array}{c}g_{11}\\g_{12}\\0\end{array}\right]$, $g_{11}=\lim_{T\to+\infty}\frac{K_i^\prime}{T}\int_0^T(h_i\circ l(\tilde{u}_r^{av}+a_r^{av}\times\eta))\eta_i\,d\tau)$ and $g_{12}=\epsilon\omega_{Li}^\prime (-a^{av}_{ri}+b_i(-y+{\lim_{T\to+\infty}}\frac{1}{T}\int_0^T h_i\circ l(\tilde{u}_r^{av}+a_r^{av}\times\eta)\,d\tau))$, $g_2(z,y)=\lim_{T\to+\infty}\frac{\delta\omega_{Hi}^\prime}{T}\int_0^T h_i\circ l(\tilde{u}_r^{av}+a_r^{av}\times\eta)\,d\tau)$ and $A_2=-\delta\omega_{Hi}^\prime$.\\
\indent Firstly, consider the following lemmas about center manifold.
\begin{lemma}
\label{lem1}
There exists a center manifold $y=q(z)$, $\Vert z\Vert<0$ for (\ref{eq15}) and (\ref{eq16}), in which $q\in C^2$ and $\sigma >0$, if we have the following conditions:
\begin{enumerate}
    \item $A_1$ and $A_2$ are constant matrices where all the eigenvalues of $A_1$ and $A_2$ have zero real part and negative real parts respectively.
    \item $g_1$ and $g_2$ are $C^2$ so that $g_i(0,0)=0$, $\frac{\partial g_i}{\partial z}(0,0)=0$ and $\frac{\partial g_i}{\partial y}(0,0)=0, $ $\forall$ $i=\{1, 2\}$. \hfill \cite{c33}
\end{enumerate}
\end{lemma}
\begin{lemma}
\label{lem2}
Consider a $\Psi(z)\in C^1$ where $\Psi(0)=0$ and $\frac{\partial\Psi}{\partial z}(0)=0$, such that $M(\Psi(z))=O(\Vert z\Vert^m)$ where $m>1$, then if $\Vert x\Vert\to 0$ we have $\vert q(z)-\psi(z)\vert=O(\Vert z\Vert^m)$ \cite{c33}.
\end{lemma}
\begin{lemma}
\label{lem3}
The stability situation of the origin of (\ref{eq15}) and (\ref{eq16}) is the same as the origin of the following equation. \cite{c33} 
\begin{equation}
    \label{177}
    \dot{z}=A_1z+g_1(z,q(z))
\end{equation}
which means that if the origin of (\ref{177}) is either stable, exponentially stable, or unstable then it would be the same for the origin of (\ref{eq15}) and (\ref{eq16}).
\end{lemma}
Therefore, according to Lemma \ref{lem1}, a center manifold $y=q(z)$ can be approximated, because the assumptions of Lemma \ref{lem1} are held in (\ref{eq15}) and (\ref{eq16}). Thus, we have the following equation: 
\begin{equation}
\label{eq17}
\begin{split}
&M(\Psi(z))=\frac{\partial\Psi}{\partial\tilde{u}^{av}_{ri}}\frac{\partial\tilde{u}^{av}_{ri}}{\partial\tau}+\frac{\partial\Psi}{\partial a^{av}_{ri}}\frac{\partial a^{av}_{ri}}{\partial\tau}+\frac{\partial\Psi}{\partial\epsilon}\frac{\partial\epsilon}{\partial\tau}\\&+\delta\omega_{Hi}^\prime(\Psi(z)-\lim_{T\to+\infty}\frac{1}{T}\int_0^T h_i\circ l(\tilde{u}_r^{av}+a_r^{av}\times\eta)\,d\tau).
\end{split}
\end{equation}
By considering Assumption \ref{as4}, if $\Psi(z)=\frac{\partial^2h_i\circ l(0)}{2\partial u^2_i}\tilde{u}_{ri}^{av^2}+\frac{\partial^2h_i\circ l(0)}{4\partial u^2_{ri}}a^{av^2}_{ri}$, then $M(\Psi(z))$ is $O(\vert\tilde{u}^{av}_{ri}\vert^3+\vert a^{av}_{ri}\vert^3+\vert\epsilon\vert^3)$. As a result, by Lemma \ref{lem2}, the center manifold is obtained as follows:
\begin{equation}
\label{eq18}
\begin{split}
y=q(z)=\Psi(z)=&\frac{\partial^2h_i\circ l(0)}{2\partial u^2_{ri}}\tilde{u}_{ri}^{av^2}+\frac{\partial^2h_i\circ l(0)}{4\partial u^2_{ri}}a^{av^2}_{ri}\\&+O(\vert\tilde{u}^{av}_{ri}\vert^3+\vert a^{av}_{ri}\vert^3+\vert\epsilon\vert^3).
\end {split}
\end{equation}
By substituting center manifold $y$ into (\ref{eq14}), the following equations are given:
\begin{equation}
\label{eq19}
\begin{split}
&\dot{\tilde{u}}_{ri}^{av}(\tau)=\delta({\lim_{T\to+\infty}}\frac{K_i^\prime}{T}\int_0^T(h_i\circ l(\tilde{u}_r^{av}+a_r^{av}\times\eta))\eta_i(\tau)\,d\tau)
\end{split}
\end{equation}
\begin{equation}
\label{eq20}
\dot{a}^{av}_{ri}(\tau)=-\delta\epsilon\omega_{Li}^\prime a^{av}_{ri}+b_i\delta\epsilon\omega_{Li}^\prime O(\vert\tilde{u}^{av}_{ri}\vert^3+\vert a^{av}_{ri}\vert^3+\vert\epsilon\vert^3).
\end{equation}
According to Lemma \ref{as3}, we can use (\ref{eq19}) and (\ref{eq20}) in order to analyze the stability of (\ref{eq14}).\\
The equilibrium of (\ref{eq19}) which is $\tilde{u}_r^e=[\tilde{u}_{r1}^e, \hdots, \tilde{u}_{rN}^e]$ admits the following equation:
\begin{equation}
\label{eq21}
\begin{split}
0&={\lim_{T\to+\infty}}\frac{1}{T}\int_0^T h_i\circ l(\tilde{u}_r^{av}+a_r^{av}\times\eta)\eta_i(\tau)\,d\tau.
\end{split}
\end{equation}
We assume that the equilibrium point $\tilde{u}_{ri}^e$ for all $i\in\{1, \hdots, N\}$ is as follows:
\begin{equation}
\label{eq22}
\tilde{u}^e_{ri}=\sum_{j=1}^Nc_j^ia_{rj}^{av}+\sum_{j=1}^N\sum_{k\ge j}^Nd_{jk}^ia_{rj}^{av}a_{rk}^{av}+O(\max_ia_{ri}^{av^3}).
\end{equation}
\indent In regard to (\ref{eq13}), if the Taylor polynomial approximation \cite{c23}, \cite{c34} of $h_i\circ l$ about zero is substituted in (\ref{eq21}) (the details of Taylor polynomial approximation and the integrals which are used are given in the Appendix), then the following equation is acquired by applying the averaging technique:
\begin{equation}
\label{eq23}
\begin{split}
&0=\frac{a_{ri}^{av}}{2}\left[\sum_{j\not=i}^N\tilde{u}_{rj}^e\frac{\partial^2h_i\circ l}{\partial u_{ri}\partial u_{rj}}(0)+\tilde{u}_{ri}^e\frac{\partial^2h_i\circ l}{\partial u_{ri}^2}(0)\right.\\&+\frac{\tilde{u}_{ri}^{e^2}}{2}\frac{\partial^3h_i\circ l}{\partial u_{ri}^3}(0)+\sum_{j\not=i}^N(\frac{\tilde{u}_{rj}^{e^2}}{2}+\frac{a_{rj}^{av^2}}{4})\frac{\partial^3h_i\circ l}{\partial u_{ri}\partial u_{rj}^2}(0)\\&+\tilde{u}_{ri}^e\sum_{j\not=i}^N\tilde{u}_{rj}^e\frac{\partial^3h_i\circ l}{\partial u_{ri}^2\partial u_{rj}}(0)+\frac{a_{ri}^{av^2}}{8}\frac{\partial^3h_i\circ l}{\partial u_{ri}^3}(0)\\&+\left.\sum_{j\not=i}^N\sum_{k>j, k\not=i}^N\tilde{u}_{rj}^e\tilde{u}_{rk}^e\frac{\partial^3h_i\circ l}{\partial u_{ri}\partial u_{rj}\partial u_{rk}}(0)\vphantom{\tilde{u}_{ri}^e}\right] +O(\max_ia_{ri}^{av^4}).
\end{split}
\end{equation}
Furthermore, (\ref{eq22}) is substituted in (\ref{eq23}) to compute $c_j^i$ and $d_{jk}^i$. By matching first order powers of $a_{ri}^{av}$, we have
\begin{equation}
\label{eq24}
\left[\begin{array}{c}0\\\vdots\\0\end{array}\right]=\sum_{i=1}^Na^{av}_{ri}\Delta\left[\begin{array}{c}c_i^1\\\vdots\\c_i^N\end{array}\right].
\end{equation}
Forasmuch as $\Delta$ is nonsingular, so $c_j^i$ should be equal to zero for all $i, j, k\in\{1,\hdots,N\}$. Therefore, with matching second order powers of  $a_{ri}^{av}$, the following equations are computed
\begin{equation}
\label{eq25}
\left[\begin{array}{c}0\\\vdots\\0\end{array}\right]=\sum_{j=1}^N\sum_{k>j}^Na_{rj}^{av}a^{av}_{rk}\Delta\left[\begin{array}{c}d_{jk}^1\\\vdots\\d_{jk}^N\end{array}\right].
\end{equation}

\begin{equation}
\label{eq26}
\left[\begin{array}{c}0\\\vdots\\0\end{array}\right]=\sum_{j=1}^Na_{rj}^{av^2}\left(\Delta\left[\begin{array}{c}d_{jj}^1\\\vdots\\d_{jj}^N\end{array}\right]+\left[\begin{array}{c}\frac{1}{4}\frac{\partial^3 h_1ol}{\partial u_{r1}\partial u_{rj}^2}(0)\\ \vdots\\ \frac{1}{4}\frac{\partial^3 h_{j-1}ol}{\partial u_{rj-1}\partial u_{rj}^2}(0)\\ \frac{1}{8}\frac{\partial^3h_jol}{\partial u_{rj}^3}(0)\\\frac{1}{4}\frac{\partial^3 h_{j+1}ol}{\partial u_{rj}^2\partial u_{rj+1}}(0)\\ \vdots\\\frac{1}{4}\frac{\partial^3 h_Nol}{\partial u_{rj}^2\partial u_{rN}}(0) \end{array}\right]\right).
\end{equation}
As a result, $d_{jk}^i=0$ for $\forall i\not=j$ and $d_{jj}^i$ is
\begin{equation}
\label{eq27}
\left[\begin{array}{c}d_{jj}^1\\\vdots\\d_{jj}^N\end{array}\right]=-\Delta^{-1}\left[\begin{array}{c}\frac{1}{4}\frac{\partial^3 h_1ol}{\partial u_{r1}\partial u_{rj}^2}(0)\\ \vdots\\ \frac{1}{4}\frac{\partial^3 h_{j-1}ol}{\partial u_{rj-1}\partial u_{rj}^2}(0)\\ \frac{1}{8}\frac{\partial^3h_jol}{\partial u_{rj}^3}(0)\\\frac{1}{4}\frac{\partial^3 h_{j+1}ol}{\partial u_{rj}^2\partial u_{rj+1}}(0)\\ \vdots\\\frac{1}{4}\frac{\partial^3 h_Nol}{\partial u_{rj}^2\partial u_{rN}}(0) \end{array}\right].
\end{equation}
Consequently, the equilibrium becomes
\begin{equation}
\label{eq28}
\tilde{u}_{ri}^e=\sum_{j=1}^Nd_{jj}^ia_{rj}^{av^2}+O(\max_ia_{ri}^{av^3}).
\end{equation}
Since the Jacobian  $\Gamma^{av}=(\gamma_{ij})_{(N\times N)}$ of averaging system (\ref{eq19}) is as follows:
\begin{equation}
\label{eq29}
\gamma_{ij}=\delta K_i^\prime\lim_{T\to+\infty}\frac{1}{T}\int_0^T\frac{\partial h_i\circ l}{\partial u_{rj}}(\tilde{u}_r^{av}+a_r^{av}\times\eta)\eta_i(\tau)\,d\tau
\end{equation}
by Taylor polynomial approximation and substituting average system equilibrium $\tilde{u}_r^e$ we have
\begin{equation}
\label{eq30}
\gamma_{ij}=\frac{1}{2}\delta K_i^\prime a_{ri}^{av}\frac{\partial^2h_i\circ l}{\partial u_{ri} \partial u_{rj}}(0)+O(\delta\max_ia_{ri}^{av^2})
\end{equation}
According to Assumptions \ref{as3} and \ref{as4}, and the fact that $a_{ri}^{av}$ is small and positive, (\ref{eq30}) is Hurwitz. Thus, the equilibrium (\ref{eq28}) is exponentially stable for average system (\ref{eq19}). Additionally, if $\tilde{u}_{ri}^{av}$ in (\ref{eq20}) is frozen at the equilibrium (\ref{eq28}) \cite{c20}, since $\delta\omega_{Li}^\prime>0$ and following the perturbation theory \cite{c32}, for all $0<\epsilon<\bar{\epsilon}$ where $\bar{\epsilon}>0$, (\ref{eq20}) is exponentially stable at the origin. Also, since $\tilde{n}_{ri}^e$ is center manifold, and is equal to $\frac{\partial^2h_i\circ l(0)}{2\partial u^2_{i}}\tilde{u}_{ri}^{e^2}+\frac{\partial^2h_i\circ l(0)}{4\partial u^2_{i}}a^{av^2}_{ri}+O(\vert\tilde{u}^{e}_{ri}\vert^3+\vert a^{av}_{ri}\vert^3+\vert\epsilon\vert^3)$, and as $a^{av}_{ri}$ and $\tilde{u}_{ri}^e$ converge to zero, it will converge to a small neighborhood of zero.\\
Finally, corresponding to the averaging theory and Lemma \ref{lem3}, for $0<\delta<\overline{\delta}$ and $\sigma>0$ where $\overline{\delta}>0$, the equilibrium of reduced system (\ref{eq12}) is exponentially stable, which means $(\tilde{u}_{ri}, a_{ri}, \tilde{n}_{ri})$ exponentially converges to a neighborhood of  $(\tilde{u}_{ri}^e, 0, \tilde{n}_{ri}^e)$, and the proof is completed.\vspace{5pt} \end{proof}

\subsection{Singular perturbation analysis}
We propound the following result in order to investigate the complete system (\ref{eq7}) and (\ref{eq8}) with singular perturbation theory \cite{c32} in the time scale $\tau=\bar{\omega}t$.
\begin{theorem}
\label{th2}
Consider system (\ref{eq7}) and (\ref{eq8}) under the Assumptions \ref{as1}, \ref{as2}, \ref{as3}, \ref{as4} and suppose $\omega_i\not=\omega_j$, $\omega_i\not=\omega_j+\omega_k$, $2\omega_i\not=\omega_j+\omega_k$, $\omega_i\not=2\omega_j+\omega_k$, $\omega_i\not=2\omega_j$, $\omega_i\not=3\omega_j$ and $\frac{\omega_i}{\omega_j}$ is rational, for all $i, j, k\in\{1,\hdots,N\}$, there exists a neighborhood of $(x, \hat{u}_i, a_i, n_i)=(l(u^\ast), u^\ast_i, 0, h_i\circ l(u^\ast))$ for $i\in\{1,\hdots,N\}$ along with constants $0<\epsilon<\overline{\epsilon}$, $0<\bar{\omega}<\omega^\ast$, $0<\delta<\overline{\delta}$ and  $\sigma>0$ where  $\overline{\epsilon}$, $\omega^\ast$, $\overline{\delta}>0$. Then, the solution $(x, \hat{u}_i, a_i, n_i)$ will converge exponentially to that point, and $J(t)$ will converge to $h_i\circ l(u^\ast)$ exponentially.
\end{theorem}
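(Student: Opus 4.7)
The plan is to treat (\ref{eq7})-(\ref{eq8}) as a singular perturbation problem in $\bar{\omega}$ and to compose Tikhonov's theorem with Theorem \ref{th1}. Rewriting (\ref{eq7}) as $\bar{\omega}\,dx/d\tau = f(x,\tilde{u}+u^\ast+a\times\eta(\tau))$ puts $x$ in the role of the fast state, while $(\tilde{u},a,\tilde{n})$ is the slow state evolving with effective rate $\delta$. The boundary layer system, in the fast time $t=\tau/\bar{\omega}$, is $dx/dt=f(x,u)$ with $u=\tilde{u}+u^\ast+a\times\eta$ frozen; by Assumption \ref{as2} its equilibrium $x=l(u)$ is locally exponentially stable, uniformly in $u$ over a compact neighborhood of $u^\ast$.

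The reduced system obtained by setting $\bar{\omega}=0$ and enforcing the quasi-steady-state $x=l(\tilde{u}+u^\ast+a\times\eta(\tau))$ from Assumption \ref{as1} is precisely (\ref{eq12}), whose exponential stability was established in Theorem \ref{th1}. The key structural observation I will lean on is the cascade inside that proof: equation (\ref{eq20}) shows $a^{av}\to 0$ exponentially on the $\epsilon$-scaled time, (\ref{eq28}) pins the slow-equilibrium $\tilde{u}_r^e$ to $O(|a^{av}|^2)$, and the center manifold relation (\ref{eq18}) then forces $\tilde{n}_r^e\to 0$. Composed together, the averaged reduced trajectory converges exponentially to the single point $(\tilde{u},a,\tilde{n})=(0,0,0)$, so the target of the subsequent perturbation arguments is an actual point rather than only a residual set.

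Next I apply the composite singular perturbation plus averaging theorem of \cite{c32}. Because (i) the averaged reduced system has an exponentially stable equilibrium at the origin and hence admits a local quadratic converse-Lyapunov function, and (ii) the boundary layer is exponentially stable uniformly in the frozen slow state, there exist thresholds $\bar{\epsilon},\bar{\delta},\omega^\ast>0$ such that for $0<\epsilon<\bar{\epsilon}$, $0<\delta<\bar{\delta}$, $0<\bar{\omega}<\omega^\ast$ the full trajectory of (\ref{eq7})-(\ref{eq8}) stays within an $O(\bar{\omega}+\epsilon+\delta)$ tube of the averaged reduced trajectory and inherits its local exponential decay toward the origin. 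Reverting from the error coordinates of (\ref{eq4}) then yields $(x,\hat{u}_i,a_i,n_i)\to(l(u^\ast),u_i^\ast,0,j_i\circ l(u^\ast))$ at an exponential rate, and smoothness of $j_i$ propagates this to $J_i(t)=j_i(x(t))\to j_i\circ l(u^\ast)$ at the same rate, which is the content of the theorem.

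The main obstacle is the consistent chaining of the three small-parameter arguments, namely averaging in $1/\omega$, the center-manifold reduction in $\epsilon$, and singular perturbation in $\bar{\omega}$, while preserving exponential rather than merely asymptotic convergence. Concretely, the technical bottleneck is to construct a composite Lyapunov function on a common neighborhood of the origin that simultaneously dominates the averaging residual on the slow subsystem, the $O(|\tilde{u}|^3+|a|^3+|\epsilon|^3)$ remainder of the center-manifold approximation in (\ref{eq18}), and the $O(\bar{\omega})$ perturbation inherited from the boundary layer, with a decay margin uniform in all three small parameters and in the sinusoidal phases $\phi_i$, so that the exponential rate claimed in the theorem survives the composition.
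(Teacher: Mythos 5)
Your proposal follows essentially the same route as the paper: the boundary-layer system is the plant $\dot{x}=f(x,u)$ with $u$ frozen, exponentially stable by Assumption \ref{as2}; the reduced system is (\ref{eq12}), whose exponential stability is inherited from Theorem \ref{th1}; and Tikhonov's theorem composes the two to give exponential convergence of $(x,\hat{u}_i,a_i,n_i)$ and hence of $J_i$. The only cosmetic difference is that the paper routes this step through the unique exponentially stable periodic solution $W_r^p$ of the reduced system supplied by averaging theory and measures the error relative to it, whereas you phrase the same composition as a tube around the averaged trajectory.
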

\begin{proof}
Corresponding to Theorem \ref{th1} and \cite{c34}, there exists a unique exponentially stable periodic solution $W_{ri}^p=\left[ \begin{array}{c} \tilde{u}_{ri}^p\\a_{ri}^p\\\tilde{n}_{ri}^p\end{array}\right]$ for $i\in\{1,\hdots,N\}$ in a neighborhood of average solution $\left[ \begin{array}{c} \tilde{u}_{i}^{av}\\a_{i}^{av}\\\tilde{n}_{i}^{av}\end{array}\right]$, which if we substitute in the system (\ref{eq8}) in the form of $\frac{dW_i}{d\tau}=\delta H_i(\tau, W, x)$, we will have
\begin{equation}
\label{eq31}
\frac{dW_{ri}^p}{d\tau}=\delta H_i(L(\tau,W_r^p),W_{ri}^p)
\end{equation}
where $L(\tau,W)=l(u^\ast+\tilde{u}+a\times \eta(\tau))$. Then, with defining $\tilde{W}_i=W_i-W_{ri}(\tau)$, we have
\begin{equation}
\label{eq32}
\bar{\omega}\frac{dx}{d\tau}=\bar{F}(\tau,\tilde{W},x)
\end{equation}
\begin{equation}
\label{eq33}
\frac{d\tilde{W}_{i}}{d\tau}=\delta \bar{H}_i(\tau,\tilde{W},x)
\end{equation}
where $\tilde{W}=[\tilde{W}_1, \hdots, \tilde{W}_N]$ and 
\begin{equation}
\label{eq34}
\begin{split}
\bar{H}_i(\tau,\tilde{W},x)&=H_i(\tau,\tilde{W}_i+W_{ri}^p(\tau),x)\\&-H_i(\tau,L(\tau,W_r^p),W_{ri}^p(\tau))
\end{split}
\end{equation}
\begin{equation}
\label{eq35}
\bar{F}(\tau,\tilde{W},x)=f\left(x,\beta(x,u^\ast+\underbrace{\tilde{u}-\tilde{u}_r^p}_{\tilde{W}_1}+\tilde{u}_r^p+a\times\eta(\tau))\right).
\end{equation}
Since $x=L(\tilde{W}+W_r^p)$ is the quasi-steady state, the reduced system is as follows:
\begin{equation}
\label{eq36}
\frac{d\tilde{W}_{ri}}{d\tau}=\delta \bar{H}_i(\tau,\tilde{W}_{ri}+W_{ri}^p,L(\tilde{W}_r+W_r^p))
\end{equation}
in which $\tilde{W}_r=0$ is the equilibrium at the origin, which is exponentially stable as it has been shown in section 3.1\\
\indent Now, the boundary layer model is studied in the time scale $t=\frac{\tau}{\bar{\omega}}$:
\begin{equation}
\label{eq37}
\begin{split}
\frac{dx_b}{dt}&=\bar{F}(x_b+L(\tilde{W}_r+W_r^p),\tilde{W})\\&=f(x_b+l(u),\beta(x_b+l(u),u))
\end{split}
\end{equation}
where $u=u^\ast+\tilde{u}+a\times \eta(\tau)$ is regarded as an independent parameter from $t$. Hence, since $f(l(u),\beta(l(u)),u)\equiv 0$ and the equilibrium of (\ref{eq37}) is $x_b=0$, according to Assumption \ref{as2}, this equilibrium is exponentially stable.\\
With exponential stability of the origin in the reduced model and boundary layer model, also with the use of Tikhonov's theorem \cite{c32}, for $0<\bar{\omega}<\omega^\ast$ where $\omega^\ast>0$, we can conclude that the solution $W(\tau)$ is close to $W_r(\tau)$ with a small neighborhood, so it exponentially converges to the periodic solution $W_r^p(\tau)$ with a small neighborhood, where $W_r^p(\tau)$ is within a neighborhood of equilibrium of the average system. As a result, $(x, \hat{u}_i, a_i, n_i)$ exponentially converges to $(l(u^\ast), u^\ast_i, 0, h_i\circ l(u^\ast))$, and consequently $J_i$ will exponentially converge to its extremum value which is $h_i\circ l(u^\ast)$, and the proof is completed.\vspace{5pt} \end{proof}

\begin{remark}
\label{rem1}
The reason of having fast convergence to NE is that when we have large initial input estimation error, the amplitude of the excitation signal will be large (see eq. (\ref{eq20})) and consequently the Jacobian matrix $\Gamma^{av}$ will be large since it is proportional to the amplitude. Also, since the amplitude will shrink gradually, the oscillation will be eliminated. This is why there is no steady state oscillation.
\end{remark}
\begin{remark}
\label{rem2}
All proofs were based on the maximization of the payoff. In order to make the problem for minimization we just need to assume $k_i < 0$, and the Jacobian matrix should be kept Hurwitz, because (\ref{eq9}) is changed to $\frac{\partial^2 j_i\circ l}{\partial u_i^2}(u^\ast)>0$.
\end{remark}
\begin{remark}
\label{rem3}
According to \cite{c35}, the systems like what we have in (\ref{eq14}) has a manifold equilibrium where if the system starts at any point on the equilibrium manifold, the system stays there and will not converge to zero. Thus, $\tilde{u}_i$ may not converge to zero and instead converges to a constant, if the system reaches to the manifold equilibrium. However, for small $\epsilon$ or in other words for large initial amplitude $a_i(0)$ the exponential stability will be guaranteed. This can happen for large initial value of input estimation error. We can have this condition by choosing proper design parameter in our ES controller. As a result, the exponential stability in this paper is guaranteed, unless the design parameters were chosen to have small initial input estimation error.   
\end{remark}
\section{Numerical Example and Simulations}
\begin{figure}[h]
\begin{center}
\includegraphics[height=9.1cm]{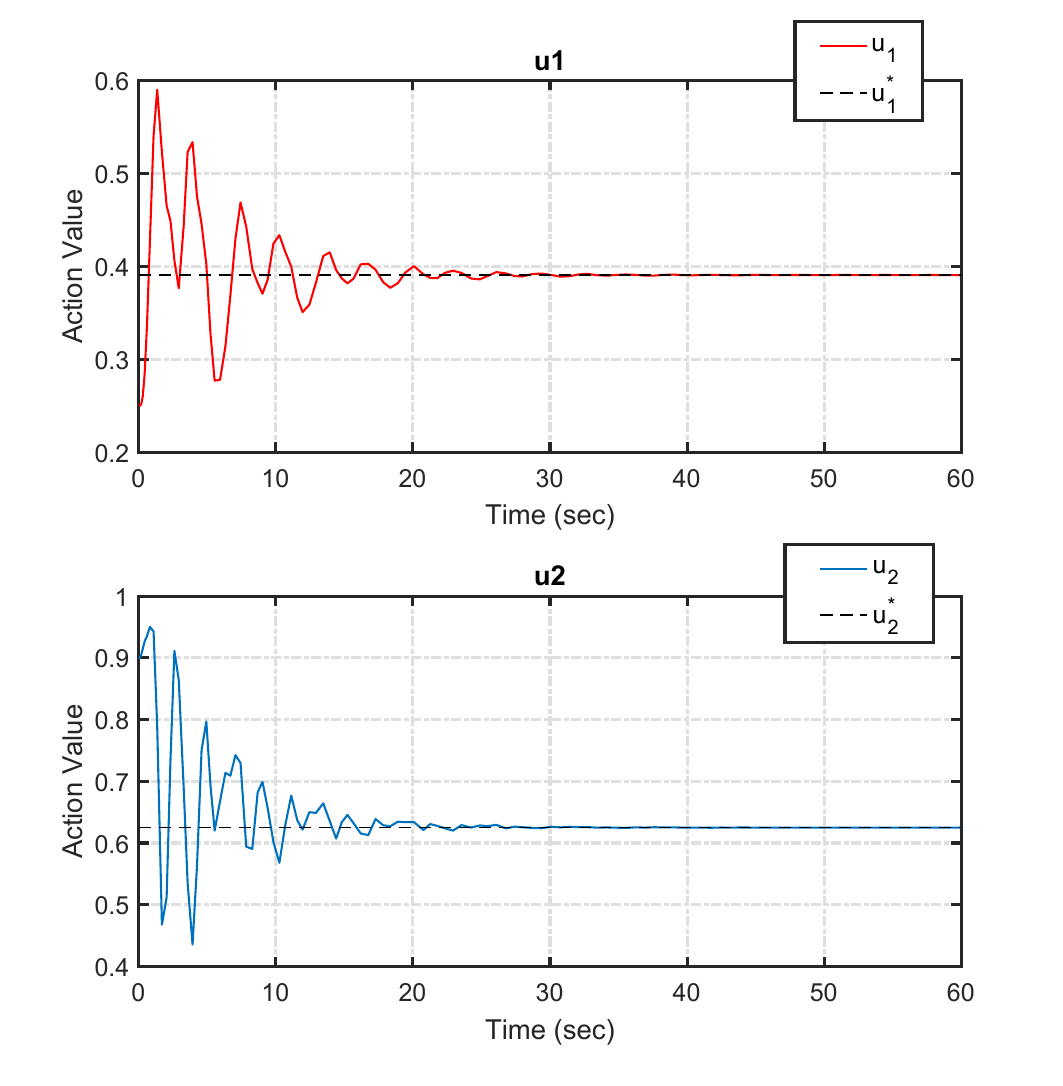}    
\caption{Action values of players as a function of time by implementing the proposed method in this paper}  
\label{fig2}                                 
\end{center}                                 
\end{figure}
\begin{figure}[h]
\begin{center}
\includegraphics[height=9.1cm]{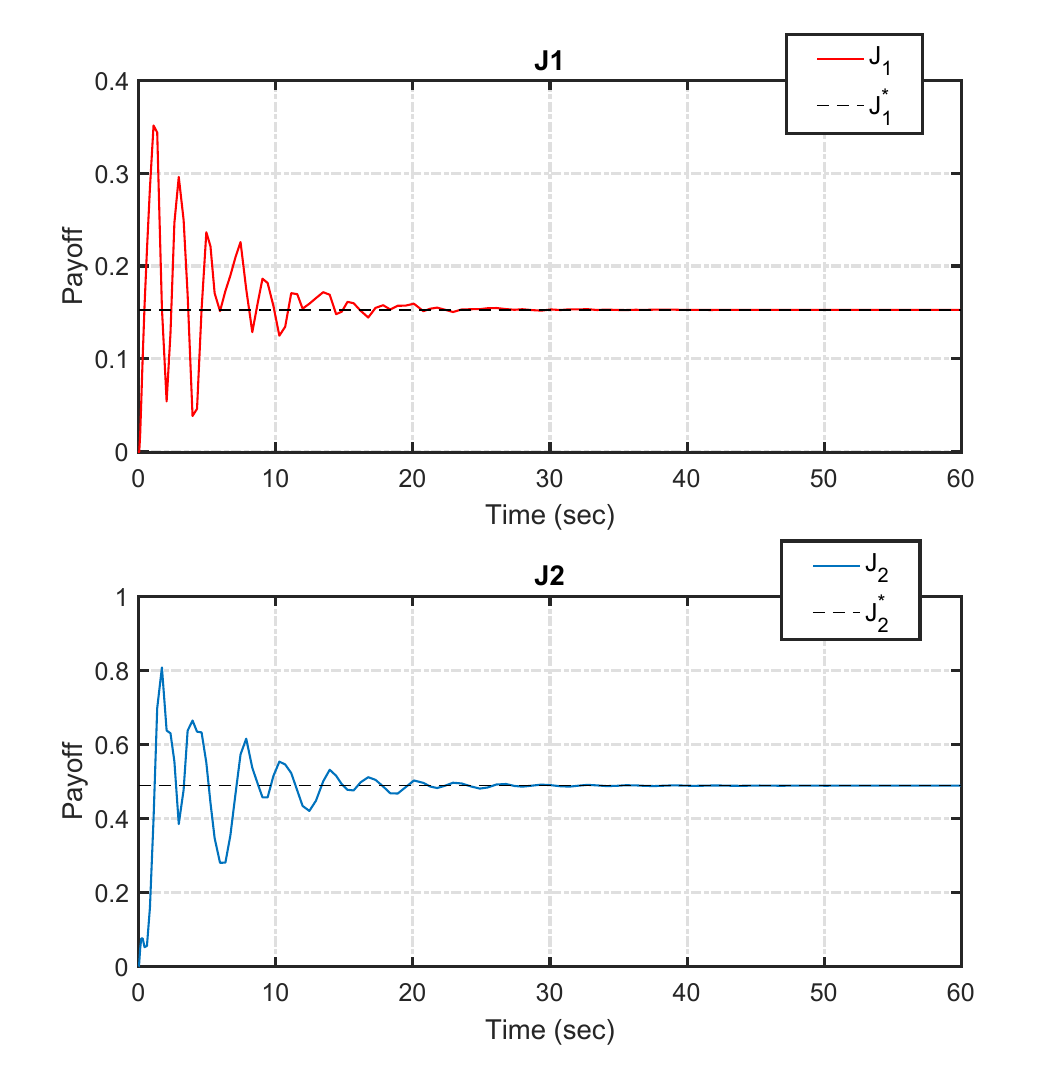}    
\caption{History of player's payoff values by employing the proposed method in this paper}  
\label{fig3}                                 
\end{center}                                 
\end{figure}
\begin{figure}[h]
\begin{center}
\includegraphics[height=9.1cm]{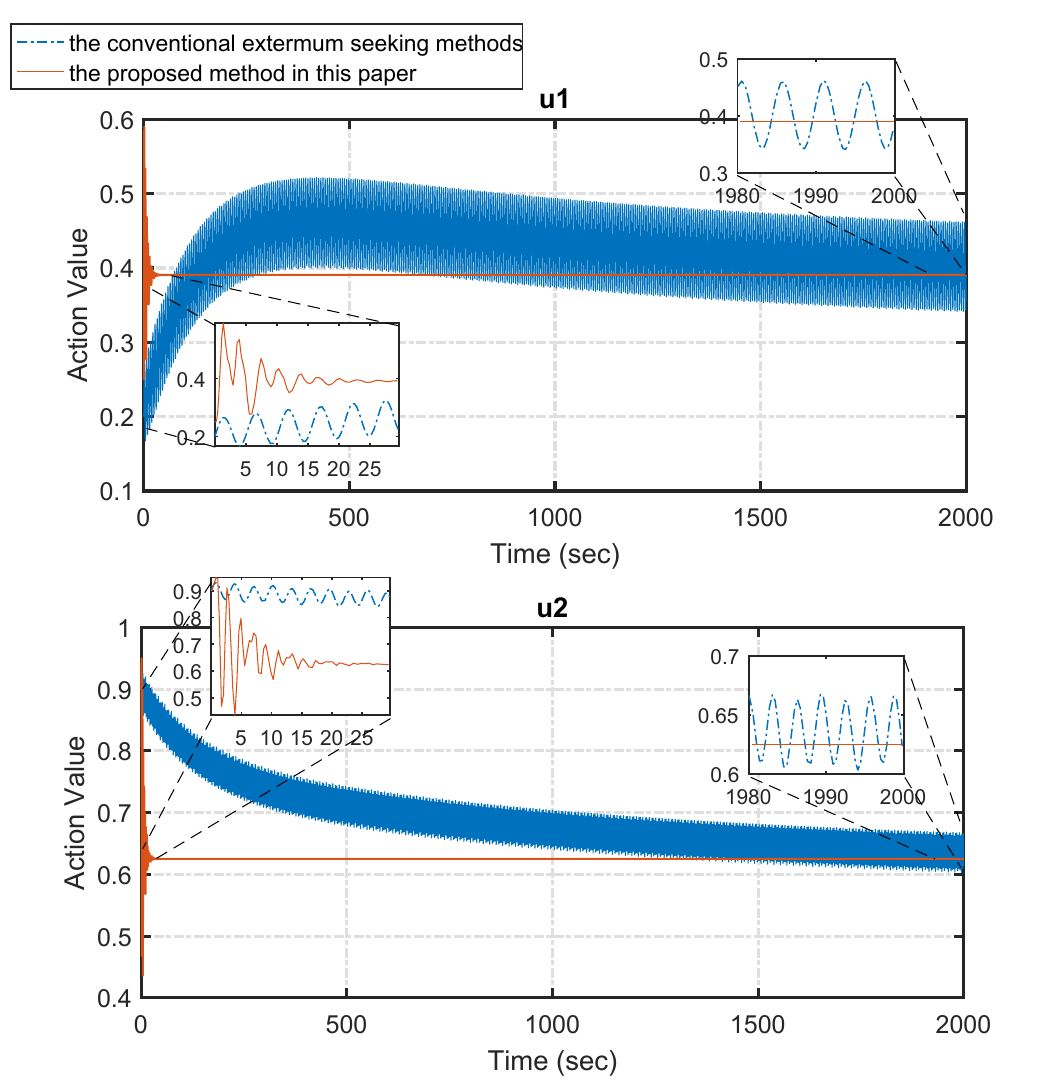}    
\caption{Action values of players as a function of time when they employ the method proposed in this paper as compared to the classical extremum seeking methods}  
\label{fig4}                                 
\end{center}                                 
\end{figure}
\label{sec4}
In this section, we consider a differential non-cooperative game so that the advantages of the proposed algorithm are investigated precisely. Oligopoly games with nonlinear demand and payoff functions \cite{c36} are real problem motivations of the kinds of games we are focusing on them. In a Cournot Oligopoly, there are different firms that produce homogeneous goods and the demands and costs are nonlinear and often dynamic and changing. Therefore, an oligopoly problem with dynamic demand and costs is one of the many motivations that we can consider for our evaluation. Consider the following differential non-cooperative game that is a form of Duopoly \cite{c23}: 
\begin{equation}
\begin{split}
\label{eq38}
\dot{x}_1&=-4x_1+x_1x_2+u_1\\
\dot{x}_2&=-4x_2+u_2
\end{split}
\end{equation}
\begin{equation}
\begin{split}
\label{eq39}
J_1&=-16x_1^2+8x_1^2x_2-x_1^2x_2^2-6x_1x_2^2+\frac{773}{32}x_1x_2-\frac{5}{8}x_1\\
J_2&=-64x_2^3+48x_1x_2-12x_1x_2^2
\end{split}
\end{equation}
The equilibrium states are calculated as
\begin{equation}
\label{eq40}
x_1^e=\frac{4u_1}{16-u_2}, \hspace{10pt} x_2^e=\frac{1}{4}u_2.
\end{equation}
Therefore, we have the following Jacobian:
\begin{equation}
\label{eq41}
\left[\begin{array}{cc} -4+\frac{1}{4}u_2& \frac{4u_1}{16-u_2}\\0&-4\end{array}\right]
\end{equation}
This matrix is Hurwitz for $u_2<16$, thus the action set of players get restricted to $\{(u_1,u_2)\in R^2 \vert u_1, u_2\geq 0, u_2<16\}$. Hence, $x^e=(x_1^e, x_2^e)$ is exponentially stable for all $(u_1, u_2)$ in the defined action set. 
Then, at $x=x^e$, the payoffs are given as follows:
\begin{equation}
\label{eq42}
J_1=-u_1^2+\frac{3}{2}u_1u_2-\frac{5}{32}u_1
\end{equation}
\begin{equation}
\label{eq43}
J_2=-u_2^3+3u_1u_2
\end{equation}
Therefore, the system has two Nash equilibria $(u_{11}^\ast, u_{21}^\ast)=(\frac{26}{64}, \frac{5}{8})$ and $(u_{12}^\ast, u_{22}^\ast)=(\frac{1}{64}, \frac{1}{8})$. $(u_{11}^\ast, u_{21}^\ast)$ admits Assumptions \ref{as3} and \ref{as4}, so it is stable, while $(u_{12}^\ast, u_{22}^\ast)$ cannot satisfy the assumptions, hence we conduct the algorithm to achieve $(u_{11}^\ast, u_{21}^\ast)$. The design parameters are selected as: $k_1=1.273$, $k_2=0.9046$, $b_1=0.7$, $b_2=0.5$, $\omega_{l1}=0.9$, $\omega_{l2}=1.5$, $\omega_{h1}=0.12$, $\omega_{h2}=0.2$, $\omega_{1}=2$, $\omega_{2}=3$ and $\phi_1=\phi_2=0$. The initial value of $(\hat{u}_1,\hat{u}_2)$ is $(\hat{u}_1(0),\hat{u}_2(0))=(0.25,0.9)$, also the state $(x_1,x_2)$ is initiated at the origin. \Cref{fig2} illustrates the history of action values for each player. \Cref{fig3} depicts the payoff values of each player as a function of time. Also, \Cref{fig4} shows a comparison between the proposed algorithm in this paper with the conventional extremum seeking control algorithms. Obviously this figure confirms the effectiveness and superiority of this algorithm with the ability of eliminating steady state oscillation and fast convergence to NE in comparison with the traditional extremum seeking control algorithms.\\
\section{Discussion}
Differential games can offer an expressive framework for a wide range of multi-agent problems that involve different agents whose decisions are impacted by others. Thus, our proposed algorithm that is offering an effective way of achieving Nash Equilibrium in differential games can be beneficial in many real-world multi-agent problems and applications. Supply chain management is one of the main streams of applications in differential games. Particularly, our algorithm is capable of solving the problem of the design of coordination management by Duopolist and Oligopolist. In such problems, we can have a multi-retailer as a differential game where a single manufacturer sells a particular product to different retailers in the same market, and each firm is trying to maximize their objective function \cite{ouardighi2013dynamic}. Smart grid with dynamic demand-side management is another broad application of our algorithm. In such problems, a differential game can be used to model the distribution of demand-side management where the price is characterized as dynamic states \cite{c11,c12}. Other areas that can directly benefit from our proposed algorithm are evasion and pursuit scenario and active defense \cite{prokopov2013linear}. Evasion and pursuit scenario can be modeled by differential games where there are conflicts between different players which are attackers, defenders, and the target. So, in such cases, our algorithm can be used for developing a feasible guidance law for an attacker \cite{qilong2018differential}. While the assumption we have might not get satisfied in all active defense problems, the problem of active defense that is occurring in the end game often follows a similar dynamic as the one proposed in this paper. Moreover, although kinematic control of a single manipulator system is well studied by different methods such as Neural-network \cite{li2021neural,tan2022discrete}, and variable damping control \cite{zahedi2021variable,zahedi2022user}, one can go beyond the direct capabilities of our proposed algorithm, and extend the problem to be applicable to kinematic control of multi-manipulator systems.   

\section{Conclusion}
\label{sec5}
In this paper, the fast extremum seeking approach introduced in \cite{c27} has been adapted to N-Player noncooperative differential games. It was shown that, to attain the Nash equilibrium (NE), each player generates its action and it was not required detailed information about payoff functions, the model and also other players' actions. Moreover, NE was achieved without oscillation and faster compared to the classical extremum seeking-based approaches. As a result, the inappropriate influence of steady-state oscillation was eliminated. Moreover, the stability and convergence analysis were presented in which the convergence without steady-state oscillation to NE was proved. Furthermore, the effectiveness of the proposed method was shown through a numerical example. Also, the comparison results of simulations between the proposed method and the conventional extremum seeking methods illustrated the superiority of the proposed algorithm.  \\

\appendix

\section{Integrals computation}
Many integrals along with Taylor polynomial approximation of $h_i\circ l$ need to be computed in order to obtain (\ref{eq23}), so for $i, j, k\in\{1, \hdots, N\}$ we have
\begin{equation}
\label{eq44}
\begin{split}
&\lim_{T\to+\infty}\frac{1}{T}\int_0^T\eta_i(\tau)\,d\tau=0, \hspace{10pt} \lim_{T\to+\infty}\frac{1}{T}\int_0^T\eta_i^2(\tau)\,d\tau=\frac{1}{2},\\
&\lim_{T\to+\infty}\frac{1}{T}\int_0^T\eta_i^3(\tau)\,d\tau=0, \hspace{10pt} \lim_{T\to+\infty}\frac{1}{T}\int_0^T\eta_i^4(\tau)\,d\tau=\frac{3}{8},
\end{split}
\end{equation}
and by making these assumptions that $\omega_i\not=\omega_j$, $2\omega_i\not=\omega_j$, and $3\omega_i\not=\omega_j$:
\begin{equation}
\label{eq45}
\begin{split}
&\lim_{T\to+\infty}\frac{1}{T}\int_0^T\eta_i(\tau)\eta_j(\tau)\,d\tau=0,\\&\lim_{T\to+\infty}\frac{1}{T}\int_0^T\eta_i^2(\tau)\eta_j(\tau)\,d\tau=0, \\
&\lim_{T\to+\infty}\frac{1}{T}\int_0^T\eta_i^3(\tau)\eta_j(\tau)\,d\tau=0,\\&\lim_{T\to+\infty}\frac{1}{T}\int_0^T\eta_i^2(\tau)\eta_j^2(\tau)\,d\tau=\frac{1}{4},
\end{split}
\end{equation}  
and these assumption that $\omega_i\not=\omega_j+\omega_k$, $\omega_i\not=2\omega_j+\omega_k$, and $2\omega_i\not=\omega_j+\omega_k$:
\begin{equation}
\label{eq46}
\begin{split}
&\lim_{T\to+\infty}\frac{1}{T}\int_0^T\eta_i(\tau)\eta_j(\tau)\eta_k(\tau)\,d\tau=0,\\
&\lim_{T\to+\infty}\frac{1}{T}\int_0^T\eta_i(\tau)\eta_j^2(\tau)\eta_k(\tau)\,d\tau=0.
\end{split}
\end{equation}
\section{Taylor polynomial approximation}
The Taylor polynomial approximation \cite{c34} of $h_i\circ l$ requires $n+1$ times differentiability of $h_i\circ l$, so the following equation is obtained:
\begin{equation}
\begin{split}
\label{eqa47}
&h_i\circ l(\tilde{u}^e+a^{av}\times\eta)=\\&\sum_{\alpha_1+\hdots+\alpha_N=0}^n\frac{\partial^{(\alpha_1+\hdots+\alpha_N)}}{\partial u_1^{\alpha_1}\hdots\partial u_N^{\alpha_N}}\frac{h_i\circ l(0)}{\alpha_1\!\hdots\alpha_N\!}(\tilde{u}^e+a^{av}\times\eta)^\alpha\\&+\sum_{\alpha_1+\hdots+\alpha_N=n+1}\frac{\partial^{(\alpha_1+\hdots+\alpha_N)}}{\partial u_1^{\alpha_1}\hdots\partial u_N^{\alpha_N}}\frac{h_i\circ l(\iota)}{\alpha_1\!\hdots\alpha_N\!}(\tilde{u}^e+a^{av}\times\eta)^\alpha\\&=\sum_{\alpha_1+\hdots+\alpha_N=0}^n\frac{h_i\circ l(0)}{\alpha_1\!\hdots\alpha_N\!}(\tilde{u}^e+a^{av}\times\eta)^\alpha\\&+O(\max_i a_i^{av^{n+1}})
\end{split}
\end{equation}
where $\iota$ is a point on the line segment of interval $[0 \hspace{8pt} \tilde{u}^e+a^{av}\times\eta(\tau)]$, $\alpha=(\alpha_1, \hdots, \alpha_N)$ and $u^\alpha$ means $u_1^{\alpha_1}\hdots u_N^{\alpha_N}$. Additionally, $O(\max_i a_i^{av^{n+1}})$ is computed on the basis of substituting (\ref{eq22}). In the process of computing (\ref{eq23}), we select $n=3$ to derive the third order derivation effect on the system as in \cite{c23}.



 \bibliographystyle{elsarticle-num} 
 \bibliography{main}





\end{document}